\newtheorem{theorem}{Theorem}[section]
\newtheorem{lemma}[theorem]{Lemma}
\newtheorem{proposition}[theorem]{Proposition}
\newtheorem{corollary}[theorem]{Corollary}
\theoremstyle{definition}
\newtheorem{definition}[theorem]{Definition}
\newtheorem{remark}[theorem]{Remark}
\newtheorem{notation}[theorem]{Notation}
\numberwithin{equation}{section}
\begin{document}

\title{Generalized de Sitter Space in $n$-dimensional Minkowski Space}

\author{David N. Pham}
\address{Department of Mathematics\\
Marymount Manhattan College\\
NY, NY 10021}
\email{dpham90@gmail.com,\hspace*{0.05in} dpham@mmm.edu}

\begin{abstract}
In this paper, we generalize the defining equation for de Sitter space by replacing the de Sitter radius with a function $f$ satisfying certain conditions; each resulting hypersurface is diffeomorphic to de Sitter space, and has a geometry (and causal character) which is controlled by the choice of $f$.  Necessary and sufficient conditions are obtained for a hypersurface to be timelike, null, or spacelike in the generalized model; in the non-null case, the geometry is given by a warped product.  Several examples of timelike, null, and spacelike hypersurfaces are presented.   Lastly, we calculate the Ricci tensor and scalar curvature for a special family of 4-dimensional generalized de Sitter spaces. 
\end{abstract}

\maketitle
\section{Introduction}
\noindent For $n\ge 1$, let $\mathbb{R}^{n+2}_1$ denote $(n+2)$-dimensional Minkowski space, that is, the space $\mathbb{R}^{n+2}$ with metric
\begin{equation}
\eta:=-dx^0\otimes dx^0+dx^1\otimes dx^1+\cdots+dx^{n+1}\otimes dx^{n+1}.
\end{equation}
$(n+1)$-dimensional de Sitter space $dS_{n+1}(r)$ (cf. \cite{HE} \cite{SSV}) is simply the $(n+1)$-dimensional hyperboloid 
\begin{equation}
\label{hyperboloid}
-(x^0)^2+(x^1)^2+\cdots + (x^{n+1})^2=r^2
\end{equation}
 in $\mathbb{R}^{n+2}_1$,  with the induced metric;  the constant $r>0$ in (\ref{hyperboloid}) is called the de Sitter radius.  It can be shown that $dS_{n+1}(r)$ is isometric to $\mathbb{R}\times S^n$ with metric
 \begin{equation}
 \label{deSitterMetric}
 -r^2~dt\otimes dt+r^2\cosh^2 t~ \Omega_n,
 \end{equation}
where $\Omega_n$ denotes the round metric on the unit $n$-sphere $S^n$.   It follows immediately from (\ref{deSitterMetric}) that $dS_{n+1}(r)$ is a time orientable Lorentzian manifold.  

From the point of view of cosmology, the aforementioned isometry allows $dS_{n+1}(r)$ to be interpreted as a spherical universe which contracts exponentially to a minimal radius $r$ before re-expanding at an exponential rate.  In addition to this, $dS_{n+1}(r)$ is also a vacuum solution of the Einstein field equations with positive cosmological constant, that is,
\begin{equation}
\label{Einstein}
\mbox{Ric}-\frac{1}{2}Sg +\Lambda g=0,
\end{equation}
where $g$ is the metric, $\mbox{Ric}$ is the Ricci tensor, $S$ is the scalar curvature, and $\Lambda$ is the cosmological constant, which, in the case of $dS_{n+1}(r)$, is 
\begin{equation}
 \Lambda=\left(\frac{n-1}{2(n+1)}\right)S=\frac{n(n-1)}{2r^2}.  
\end{equation}

In this paper, we generalize the defining equation for de Sitter space by replacing the de Sitter radius with a function $f:(a,b)\rightarrow \mathbb{R}$ satisfying certain conditions; each resulting hypersurface is diffeomorphic to de Sitter space, and has a geometry (and causal character) which is controlled by the choice of $f$.  Keeping in line with the notation for de Sitter space, we define $dS_{n+1}(f)$ to be the hypersurface given by
\begin{equation}
\label{hypersurfaceGeneral}
-(x^0)^2+(x^1)^2+\cdots + (x^{n+1})^2=f(x^0)^2,
\end{equation}
with the induced metric from the ambient Minkowski space.

The rest of the paper is organized as follows. In section 2, we recall some definitions and background from the theory of Lorentzian manifolds.  In section 3, we state and prove the main result of the paper.  Necessary and sufficient conditions are obtained for a hypersurface to be timelike, null, or spacelike in the generalized model; in the non-null case, the geometry is given by a warped product.  Several examples of timelike, null, and spacelike hypersurfaces are presented.  The flexibility of the generalized model is demonstrated by constructing a spacetime which expands for $t>0$, but has a constant, non-zero radius for $t\le 0$.  Lastly, in section 4, we calculate the Ricci tensor and scalar curvature for a special family of 4-dimensional generalized de Sitter spaces; the results are then compared with that of ordinary de Sitter space.

\section{Preliminaries} 
\subsection{Lorentzian Manifolds}
A Lorenztian manifold is a pair $(M,g)$ where $M$ is a smooth manifold of dimension $n\ge 2$ and $g$ is a symmetric, nondegenerate tensor field of type (0,2) with signature $(-+\cdots +)$.  A tangent vector $v\in T_pM$ can be classified in to three classes according to the sign of $g(v,v)$.  Specifically, $v$ is
\begin{itemize}
\item[(i)] timelike if $g(v,v)<0$
\item[(ii)] null if $g(v,v)=0$ and $v\neq 0$
\item[(iii)] spacelike if $g(v,v)>0$.
\end{itemize}
$v$ is called causal if $v$ is timelike or null.  If $u\in T_pM$ is timelike, then it can be shown that $g(u,v)\neq 0$ for all causal vectors $v$.  This fact allows one to define an equivalence relation $\sim$ on the set of timelike vectors of $T_pM$ by taking $u\sim v$ if $g(u,v)<0$.  Moreover, if $n\in T_pM$ is null and $u,v\in T_pM$ are timelike then $g(u,n)$ and $g(v,n)$ are of the same sign iff $u\sim v$.  Consequently, fixing an equivalence class of timelike vectors on $T_pM$ defines a \textit{time orientation} on the set of causal vectors of $T_pM$.  Specifically, if $u\in T_pM$ is a representative of the chosen equivalence class and $v\in T_pM$ is causal, then $v$ belongs to the \textit{future causal cone} (\textit{past causal cone}) if $g(u,v)<0$ ($g(u,v)>0$).  

A Lorentzian manifold $(M,g)$ is time orientable if it admits a smooth, non-vanishing timelike vector field; $(M,g)$ is then time oriented if, for every $p\in M$, $T_pM$ is smoothly assigned a time orientation, that is, the future causal cone of $T_pM$ for all $p\in M$ is fixed by some timelike vector field.  Its straightforward to show that if $M$ is connected and time orientable, then there are only two possible time orientations. 
\begin{definition}
A \textit{spacetime} is a connected, time oriented (thus time orientable) Lorentzian manifold.
\end{definition}
\noindent A hypersurface $\Sigma$ of a Lorentzian manifold $(M,g)$ is called 
\begin{itemize}
\item[(i)] timelike if the normal to $T_p\Sigma$ for all $p\in \Sigma$ is spacelike
\item[(ii)] null if the normal to $T_p\Sigma$ for all $p\in \Sigma$ is null
\item[(iii)] spacelike if the normal to $T_p\Sigma$ for all $p\in \Sigma$ is timelike.
\end{itemize} 
\noindent In other words, if $i: \Sigma\hookrightarrow M$ is the inclusion map, then $(\Sigma,i^\ast g)$ is Lorentizan (Riemannian) iff $\Sigma$ is timelike (spacelike); in addition, $\Sigma$ is null iff $i^\ast g$ is degenerate.

\subsection{Warped Products}
In this section, we recall the definition of the warped product of pseduo-Riemannian manifolds (cf. \cite{BE} \cite{On}).
\begin{definition}
\label{WarpedProductDef}
Let $(B,g_B)$ and $(F,g_F)$ be pseduo-Riemannian manifolds and let $k>0$ be a smooth function on $B$.  Then the warped product $B\times_k F$ is the product manifold $B\times F$ with metric
\begin{equation}
g:=\pi^\ast_Bg_B+(k\circ\pi_B)^2\pi^\ast_F g_F
\end{equation}
where $\pi_B$ and $\pi_F$ are the natural projections of $B\times F$ onto $B$ and $F$, respectively.
\end{definition}
\noindent In Definition \ref{WarpedProductDef}, the function $k$ is called the \textit{warping function}.


\section{The Hypersurface $dS_{n+1}(f)$}
\begin{theorem}
\label{deSitterGeneral}
If $\alpha:\mathbb{R}\rightarrow \mathbb{R}$ is a smooth function for which $\alpha>0$ for all $t$ and
\begin{equation}
h(t):=\alpha(t)\sinh t
\end{equation}
has nonzero derivative for all $t$, then $dS_{n+1}(f)$ with $f=\alpha\circ h^{-1}$ is isometric to $\mathbb{R}\times S^n$ with metric
\begin{equation}
\label{generalMetric}
g=((\alpha')^2-\alpha^2)dt\otimes dt+\alpha^2 \cosh^2 t~\pi^\ast\Omega_n
\end{equation}
where $\alpha':=\frac{d\alpha}{dt}$, $\Omega_n$ is the round metric on $S^n$, and $\pi: \mathbb{R}\times S^n\rightarrow S^n$ is the projection map.  
\end{theorem}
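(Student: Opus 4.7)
The plan is to exhibit an explicit map $\Phi:\mathbb{R}\times S^n\to dS_{n+1}(f)$ and verify it is a diffeomorphism whose pullback of the induced Minkowski metric is the right-hand side of (\ref{generalMetric}). Guided by the classical parametrization of de Sitter space (recover it by setting $\alpha(t)\equiv r$), I would define
\begin{equation*}
\Phi(t,\omega) := \bigl(h(t),\, \alpha(t)\cosh t\cdot \omega\bigr),
\end{equation*}
where $\omega=(\omega^1,\dots,\omega^{n+1})$ denotes the inclusion $S^n\hookrightarrow\mathbb{R}^{n+1}$.

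First I would check that $\Phi$ actually lands in $dS_{n+1}(f)$: using $\cosh^2 t-\sinh^2 t=1$ and $|\omega|^2=1$, a short computation gives $-(x^0)^2+\sum_{i=1}^{n+1}(x^i)^2=\alpha(t)^2$, which equals $f(h(t))^2=f(x^0)^2$ by the definition $f=\alpha\circ h^{-1}$. Next, since $h'$ is nonzero everywhere, $h$ is strictly monotone, hence a diffeomorphism onto an open interval $(a,b)\subseteq\mathbb{R}$ on which $f$ is defined. A two-sided inverse of $\Phi$ is then given by $t=h^{-1}(x^0)$ and $\omega=(x^1,\dots,x^{n+1})/(\alpha(t)\cosh t)$; this is well-defined and smooth because $\alpha>0$ and $\cosh t\ge 1$, so the denominator never vanishes. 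This establishes that $\Phi$ is a diffeomorphism onto $dS_{n+1}(f)$.

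For the metric, I would compute the component differentials:
\begin{equation*}
dx^0=(\alpha'\sinh t+\alpha\cosh t)\,dt,\qquad dx^i=(\alpha'\cosh t+\alpha\sinh t)\,\omega^i\,dt+\alpha\cosh t\,d\omega^i.
\end{equation*}
Plugging these into $\Phi^\ast\eta=-(dx^0)^2+\sum_i(dx^i)^2$, the cross-terms $dt\cdot d\omega^i$ are killed by the identity $\sum_i\omega^i\,d\omega^i=\tfrac12 d(|\omega|^2)=0$ on $S^n$, and the pure $\sum_i(d\omega^i)^2$ piece restricts to $\pi^\ast\Omega_n$. The $dt^2$ coefficient becomes
\begin{equation*}
-(\alpha'\sinh t+\alpha\cosh t)^2+(\alpha'\cosh t+\alpha\sinh t)^2=(\alpha')^2(\cosh^2 t-\sinh^2 t)-\alpha^2(\cosh^2 t-\sinh^2 t)=(\alpha')^2-\alpha^2,
\end{equation*}
and the spatial coefficient is $\alpha^2\cosh^2 t$, matching (\ref{generalMetric}).

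The only genuine obstacle is the bookkeeping in the pullback; the algebraic collapse is routine once the $S^n$ identity $\omega\cdot d\omega=0$ is used to kill the cross terms and $\cosh^2-\sinh^2=1$ is applied to the $dt^2$ coefficient. Verifying that $\Phi$ is a global diffeomorphism — rather than merely a local parametrization — is the substantive step, and it is handled entirely by the monotonicity of $h$ together with the positivity hypothesis on $\alpha$.
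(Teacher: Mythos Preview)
Your proposal is correct and follows essentially the same approach as the paper: both define the parametrization $(t,\omega)\mapsto(\alpha\sinh t,\ \alpha\cosh t\cdot\omega)$, invert it via $h^{-1}$ using the hypotheses $h'\neq 0$ and $\alpha>0$, and compute the pullback of $\eta$ by killing the $dt\,d\omega^i$ cross terms with $\sum_i\omega^i\,d\omega^i=0$ and collapsing the $dt^2$ coefficient with $\cosh^2 t-\sinh^2 t=1$. The only cosmetic difference is that the paper first extends the parametrization to an ambient diffeomorphism $\mathbb{R}^{n+2}\to(a,b)\times\mathbb{R}^{n+1}$ and separately checks (via the regular value theorem) that $dS_{n+1}(f)$ is a smooth hypersurface, whereas you work directly on $\mathbb{R}\times S^n$; your route is slightly more economical, though you should explicitly note that the inverse formula lands in $S^n$ (i.e., $|\omega|=1$), which follows from the defining equation of $dS_{n+1}(f)$.
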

\begin{proof}
Let $(a,b)\subset\mathbb{R}$ denote the domain of $f$ and note that this is simply the image of $h$ (which must be an open set since $h$ is a diffeomorphsim onto its image).  We now show that  $dS_{n+1}(f)$ is a smooth hypersurface of $(a,b)\times \mathbb{R}^{n+1}\subset \mathbb{R}^{n+2}$.  To see this, let $F:(a,b)\times \mathbb{R}^{n+1}\rightarrow \mathbb{R}$ be the map defined by
\begin{equation}
F(x)= -(x^0)^2+(x^1)^2+\cdots +(x^{n+1})^2-f(x^0)^2.
\end{equation}
Then $dS_{n+1}(f)=F^{-1}(0)$ and 
\begin{equation}
dF = 2\left(-(x^0+f'f)~dx^0+\sum_{i=1}^{n+1}x^i ~dx^i\right).
\end{equation}
Since $f\neq 0$, it follows that $dF_x\neq 0$ for all $x\in F^{-1}(0)$.  This implies that every element of $F^{-1}(0)$ is a regular point which proves that $F^{-1}(0)$ is a smooth hypersurface of $(a,b)\times \mathbb{R}^{n+1}$ (and hence a smooth hypersurface of  $\mathbb{R}^{n+2}$) (cf. Corollary 8.10 of \cite{L}).

Next, we verify that $dS_{n+1}(f)$ is diffeomorphic to  $\mathbb{R}\times S^n$.  To do this, let $\varphi: \mathbb{R}^{n+2}\rightarrow \mathbb{R}^{n+2}$ be the smooth map defined by
\begin{align}
x^0&=\alpha(t)\sinh t\\
x^i &= z^i\alpha(t)\cosh t,~1\le i\le n+1
\end{align}
where $t:=z^0$.  Since $h$ is a diffeomorphism onto its image, it follows that $\varphi:\mathbb{R}^{n+2}\rightarrow \varphi(\mathbb{R}^{n+2})$ is invertible with inverse given by 
\begin{align}
\label{tDef}
t&=h^{-1}(x^0)\\
z^i&=\frac{x^i}{\alpha(h^{-1}(x^0))\cosh (h^{-1}(x^0))},~1\le i\le n+1.
\end{align}
Since $h:\mathbb{R}\rightarrow (a,b)$ is a diffeomorphism and $\alpha$ is nonzero and smooth, it follows that the above inverse map is smooth.  This shows that $\varphi:  \mathbb{R}^{n+2}\rightarrow \varphi(\mathbb{R}^{n+2})=(a,b)\times \mathbb{R}^{n+1}$ is a diffeomorphism.  

Setting
\begin{equation}
\omega(x):=-(x^0)^2+(x^1)^2+\cdots +(x^{n+1})^2,
\end{equation}
a direct calculation shows that a point $(t,z)\in \mathbb{R}^{n+2}$ satisfies
\begin{equation}
\omega(\varphi(t,z))=\alpha^2(t)
\end{equation}
iff $z\in S^n$.   This shows that $\varphi(\mathbb{R}\times S^n)=dS_{n+1}(f)$.  This fact along with the fact that $\varphi$ is a diffeomorphism (onto its image) and $\mathbb{R}\times S^n\subset \mathbb{R}^{n+2}$  and $dS_{n+1}(f)\subset \varphi(\mathbb{R}^{n+2})$ are embedded submanifolds proves that 
\begin{equation}
\label{dSn(f)Diffeo}
\varphi|_{\mathbb{R}\times S^n}:\mathbb{R}\times S^n\rightarrow  dS_{n+1}(f)
\end{equation} 
is a diffeomorphism.

For (\ref{dSn(f)Diffeo}) to be an isometry, the metric on $\mathbb{R}\times S^n$ must necessarily be
\begin{equation}
\label{generalMetric1}
g=(\varphi|_{\mathbb{R}\times S^n})^\ast i^\ast\eta=j^\ast\varphi^\ast\eta,
\end{equation}
where $i: dS_{n+1}(f)\hookrightarrow \mathbb{R}^{n+2}_1$ and $j: \mathbb{R}\times S^n\hookrightarrow\mathbb{R}^{n+2}$ are the inclusion maps.  To compute (\ref{generalMetric1}),  note that 
\begin{align}
\nonumber
-j^\ast\varphi^\ast(dx^0\otimes dx^0)&=-(\alpha' \sinh t+\alpha\cosh t)^2 dt\otimes dt\\
\label{x0}
&=-\left((\alpha')^2\sinh^2 t+2\alpha\alpha' \sinh t\cosh t+\alpha^2\cosh^2 t\right)dt\otimes dt
\end{align}
and 
\begin{align}
\nonumber
j^\ast\varphi^\ast(dx^i\otimes dx^i) &= (\alpha'\cosh t+\alpha \sinh t)^2(z^i\circ j)^2 dt\otimes dt+\alpha^2 \cosh^2 t~j^\ast(dz^i\otimes dz^i)\\
\label{xi}
&+(z^i\circ j)(\alpha \cosh t)(\alpha'\cosh t+\alpha \sinh t)(dt\otimes j^\ast(dz^i)+j^\ast(dz^i)\otimes dt)
\end{align}
for $1\le i\le n+1$.   Since 
\begin{equation}
\sum_{i=1}^{n+1} (z^i\circ j)^2=1,
\end{equation}
it follows that 
\begin{equation}
\label{zn+1}
(z^{n+1}\circ j)j^\ast(dz^{n+1})=-\sum_{i=1}^n (z^i\circ j) j^\ast(dz^i).
\end{equation}
Substituting (\ref{zn+1}) into the last term of (\ref{xi}) for $i=n+1$ gives
\begin{align}
\nonumber
j^\ast\varphi^\ast(dx^{n+1}\otimes &dx^{n+1})=(\alpha'\cosh t+\alpha \sinh t)^2(z^{n+1}\circ j)^2 dt\otimes dt+\alpha^2 \cosh^2 t~j^\ast(dz^{n+1}\otimes dz^{n+1})\\
\nonumber
&-\sum_{i=1}^n(z^i\circ j)(\alpha \cosh t)(\alpha'\cosh t+\alpha \sinh t)(dt\otimes j^\ast(dz^i)+j^\ast(dz^i)\otimes dt).
\end{align}
It follows from this that
\begin{align}
\nonumber
\sum_{i=1}^{n+1} j^\ast\varphi^\ast(dx^i\otimes dx^i)&= (\alpha'\cosh t+\alpha \sinh t)^2 dt\otimes dt+\alpha^2 \cosh^2 t\sum_{i=1}^{n+1}j^\ast(dz^i\otimes dz^i)\\
\nonumber
&= \left((\alpha')^2\cosh^2 t+2\alpha\alpha'\cosh t \sinh t+\alpha^2\sinh^2t\right) dt\otimes dt\\
\label{xiSum}
&+\alpha^2 \cosh^2 t\sum_{i=1}^{n+1}j^\ast(dz^i\otimes dz^i).
\end{align}
Using (\ref{x0}) and (\ref{xiSum}), we have
\begin{align}
\nonumber
g&=-j^\ast\varphi^\ast(dx^0\otimes dx^0)+\sum_{i=1}^{n+1}j^\ast\varphi^\ast(dx^i\otimes dx^i)\\
\nonumber
&=((\alpha')^2-\alpha^2)dt\otimes dt+\alpha^2 \cosh^2 t\sum_{i=1}^{n+1}j^\ast(dz^i\otimes dz^i)\\
&=((\alpha')^2-\alpha^2)dt\otimes dt+\alpha^2 \cosh^2 t~\pi^\ast\Omega_n.
\end{align}  
\end{proof}
\begin{notation}
For the remainder of this paper, we will adopt a slight abuse of notation and denote $\pi^\ast\Omega_n$ as $\Omega_n$.
\end{notation}

\noindent Motivated by Theorem \ref{deSitterGeneral}, we now introduce the following definitions:
\begin{definition}
Let $\Psi$ denote the set of smooth functions $\alpha:\mathbb{R}\rightarrow \mathbb{R}$ which satisfy the following two conditons:
\begin{itemize}
\item[(i)] $\alpha>0$ for all $t$, and
\item[(ii)] $h(t):=\alpha(t)\sinh t$ has non-zero derivative for all $t$.
\end{itemize}
\end{definition}
\begin{definition}
Let $\hat{\Psi}$ be the set of all $\alpha\in\Psi$ which satisfy $|\alpha'|\neq \alpha$ for all $t$.  
\end{definition}
\begin{definition}
For $\alpha\in \Psi$, let $f_\alpha$ be the smooth function defined by
\begin{equation}
\label{fDef}
f_\alpha:=\alpha\circ h^{-1}
\end{equation}
where $h(t):=\alpha(t)\sinh t$.
\end{definition}
\begin{remark}
As a special case, we note that when $\alpha\equiv r$ for $r>0$ a constant, we have $dS_{n+1}(f_\alpha)=dS_{n+1}(r)$.  As one would expect from this, the metric in Theorem \ref{deSitterGeneral} does indeed coincide with the de Sitter metric (see (\ref{deSitterMetric})) when $\alpha\equiv r$.
\end{remark}
The following are some immediate consequences of Theorem  \ref{deSitterGeneral}.
\begin{corollary}
If $\alpha\in \hat{\Psi}$, then $dS_{n+1}(f_\alpha)$ is isometric to a warped product of $(\mathbb{R},\beta dt\otimes dt)$ and $(S^n,\Omega_n)$ where 
\begin{itemize}
\item[1.] $\beta:=(\alpha')^2-\alpha^2$, and
\item[2.] the warping function is $\alpha\cosh t$.
\end{itemize}
\end{corollary}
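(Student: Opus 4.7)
The plan is to reduce the corollary directly to Theorem~\ref{deSitterGeneral}. That theorem already provides an isometry between $dS_{n+1}(f_\alpha)$ and $\mathbb{R}\times S^n$ equipped with
\begin{equation*}
g=((\alpha')^2-\alpha^2)\,dt\otimes dt+\alpha^2\cosh^2 t~\pi^\ast\Omega_n,
\end{equation*}
so the only work left is to recognize this $g$ as the warped product described in the statement.

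First I would check that $(\mathbb{R},\beta\,dt\otimes dt)$ qualifies as a pseudo-Riemannian manifold in the sense of Definition~\ref{WarpedProductDef}. Nondegeneracy of $\beta\,dt\otimes dt$ on each tangent line amounts to $\beta=(\alpha')^2-\alpha^2$ being nowhere zero, which is precisely the extra condition $|\alpha'|\neq\alpha$ built into the definition of $\hat{\Psi}$. Depending on the sign of $\beta$ the base is either Riemannian ($\beta>0$) or Lorentzian ($\beta<0$), but either way the warped product construction applies.

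Next I would verify that the proposed warping function $k(t):=\alpha(t)\cosh t$ is smooth and strictly positive on $\mathbb{R}$; this is immediate from $\alpha>0$ (by membership in $\Psi$) and $\cosh t>0$.

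Finally, I would match the two summands of $g$ against the two pieces of the warped product metric. The term $((\alpha')^2-\alpha^2)\,dt\otimes dt$ depends only on the base coordinate, so it coincides with $\pi_B^\ast(\beta\,dt\otimes dt)$; the term $\alpha^2\cosh^2 t~\pi^\ast\Omega_n$ is exactly $(k\circ\pi_B)^2\,\pi_F^\ast\Omega_n$ by the definition of $k$. Hence $g=\pi_B^\ast g_B+(k\circ\pi_B)^2\pi_F^\ast g_F$, which is the warped product $(\mathbb{R},\beta\,dt\otimes dt)\times_k(S^n,\Omega_n)$. There is no real obstacle here: the only substantive point is that the hypothesis $\alpha\in\hat{\Psi}$ is precisely what is needed to guarantee nondegeneracy of the base metric, which is why the corollary is stated for $\hat{\Psi}$ rather than for all of $\Psi$.
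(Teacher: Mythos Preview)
Your proposal is correct and matches the paper's approach: the paper presents this corollary as an immediate consequence of Theorem~\ref{deSitterGeneral} without writing out a proof, and your argument is exactly the routine verification one would supply---identifying the metric from the theorem with the warped product metric and noting that $\alpha\in\hat{\Psi}$ is precisely what makes the base metric nondegenerate.
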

\begin{corollary}
Let $\alpha\in \Psi$.  Then $dS_{n+1}(f_\alpha)$ is
\begin{itemize}
\item[(i)] a timelike hypersurface iff $\alpha>|\alpha'|$ for all $t$
\item[(ii)] a null hypersurface iff $\alpha=re^{\epsilon t}$ where $\epsilon =\pm 1$ and $r>0$
\item[(iii)] a spacelike hypersurface iff $\alpha<|\alpha'|$ for all $t$.
\end{itemize}
\end{corollary}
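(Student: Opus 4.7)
My plan is to read the causal character of the hypersurface directly off the induced metric produced by Theorem \ref{deSitterGeneral}. By the remark at the end of Section 2.1, $dS_{n+1}(f_\alpha)$ is timelike, null, or spacelike exactly when its induced metric is Lorentzian, degenerate, or Riemannian. Since $\varphi|_{\mathbb{R}\times S^n}$ is an isometry, I may work with
\begin{equation*}
g=((\alpha')^2-\alpha^2)\,dt\otimes dt+\alpha^2\cosh^2 t\,\Omega_n
\end{equation*}
on $\mathbb{R}\times S^n$ instead.

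The key observation is that the $S^n$ block is essentially inert: $\Omega_n$ is positive definite and its coefficient $\alpha^2\cosh^2 t$ is strictly positive everywhere (since $\alpha\in\Psi$ means $\alpha>0$). Consequently the entire signature is controlled by the scalar $\beta(t):=(\alpha')^2-\alpha^2$ multiplying $dt\otimes dt$. Specifically, $g$ is Lorentzian at a given $t$ iff $\beta(t)<0$, Riemannian iff $\beta(t)>0$, and degenerate iff $\beta(t)=0$. Since the causal-type definitions require the relevant condition to hold at every point, (i) and (iii) follow at once from $\beta<0\Leftrightarrow \alpha>|\alpha'|$ and $\beta>0\Leftrightarrow \alpha<|\alpha'|$.

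The main subtle point, and the step I expect to require the most care, is (ii). The condition $\beta\equiv 0$ is equivalent to $|\alpha'|=\alpha$, i.e.\ $\alpha'/\alpha=\pm 1$, at every $t$. A priori the sign could switch with $t$, so one must rule this out: the function $\alpha'/\alpha$ is continuous on $\mathbb{R}$ (here I use $\alpha>0$ together with smoothness of $\alpha$), and a continuous function taking values in the disconnected set $\{-1,+1\}$ must be constant. Thus $\alpha'=\epsilon\alpha$ for some fixed $\epsilon\in\{\pm1\}$, and integrating yields $\alpha(t)=re^{\epsilon t}$ with $r>0$ (positivity being forced by (i) of $\Psi$). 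I would close by verifying the converse: for $\alpha=re^{\epsilon t}$ one has $h'(t)=re^{\epsilon t}(\epsilon\sinh t+\cosh t)$, which is everywhere nonzero because $\cosh t>|\sinh t|$, so indeed $\alpha\in\Psi$ and the induced metric $g=\alpha^2\cosh^2 t\,\Omega_n$ is degenerate in the $dt$ direction, giving a null hypersurface.
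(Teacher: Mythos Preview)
Your proof is correct and follows essentially the same route as the paper: read the causal type off the sign of $\beta=(\alpha')^2-\alpha^2$ in the metric of Theorem~\ref{deSitterGeneral}, and for (ii) solve $(\alpha')^2=\alpha^2$ to obtain $\alpha=re^{\epsilon t}$. Your continuity argument for why the sign $\epsilon$ cannot switch is a slightly more explicit version of what the paper compresses into an appeal to ODE uniqueness, but the underlying idea is the same.
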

\begin{proof}
(i) and (iii) follow immediately from Theorem \ref{deSitterGeneral}.  For $(ii)$, its straightforward to show that $re^{\epsilon t}\in \Psi$ for $\epsilon=\pm 1$ and $r>0$.  From Theorem  \ref{deSitterGeneral},  $dS_{n+1}(f_\alpha)$ is null precisely when
\begin{equation}
\label{preciseNull}
(\alpha')^2-\alpha^2=0~\forall t.
\end{equation}
(\ref{preciseNull}) is equivalent to the condition that $\alpha'=\pm \alpha$.  The uniqueness of linear ODE's implies that $\alpha=re^{\epsilon t}$ where $\epsilon$ is $1$ or $-1$.  In addition, $r>0$ since $\alpha\in \Psi$.  
\end{proof}

\begin{corollary}
\label{HypersurfaceFamily}
Let $\alpha=re^{\lambda t}$ where $r>0$ and $\lambda$ are constants.  Then
\begin{itemize}
\item[(i)] $\alpha\in\Psi$ iff $-1\le \lambda \le 1$ 
\item[(ii)] $dS_{n+1}(f_\alpha)$ is a timelike hypersurface iff $-1<\lambda < 1$
\item[(iii)] $dS_{n+1}(f_\alpha)$ is a null hypersurface iff $\lambda = 1$ or $-1$.
\end{itemize}
\end{corollary}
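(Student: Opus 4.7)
The plan is to reduce each item to an elementary calculation using Theorem \ref{deSitterGeneral} and the preceding corollary, since the real content is already packaged there. The only condition that requires genuine analysis is the one for membership in $\Psi$; once that is in hand, (ii) and (iii) fall out by direct substitution.

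For (i), I would note that $\alpha = re^{\lambda t}$ is automatically positive, so the only condition to check is that $h(t) = re^{\lambda t}\sinh t$ has nowhere-vanishing derivative. A short computation gives
\begin{equation*}
h'(t) = re^{\lambda t}\bigl(\lambda \sinh t + \cosh t\bigr),
\end{equation*}
and since $re^{\lambda t}$ never vanishes, the question becomes whether $\lambda \sinh t + \cosh t$ has a zero. For $\lambda = 0$ it clearly has none. For $\lambda \neq 0$ the equation rearranges to $\tanh t = -1/\lambda$, and since $\tanh$ takes values precisely in $(-1,1)$, this has a solution iff $|1/\lambda| < 1$, i.e.\ $|\lambda| > 1$. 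Hence $h'$ is nowhere zero iff $|\lambda| \le 1$, giving the stated range $-1 \le \lambda \le 1$.

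For (ii) I would invoke the preceding corollary: the hypersurface is timelike iff $\alpha > |\alpha'|$ everywhere. Since $\alpha' = r\lambda e^{\lambda t}$, the factor $re^{\lambda t}$ cancels and the inequality reduces to $1 > |\lambda|$, i.e.\ $-1 < \lambda < 1$. For (iii) the same corollary characterizes null hypersurfaces as exactly the exponentials $\alpha = re^{\epsilon t}$ with $\epsilon = \pm 1$, which in our parametrization is precisely $\lambda = \pm 1$.

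The main obstacle, if any, is the case analysis in (i); the reduction to $\tanh t = -1/\lambda$ is the key step, and then the fact that $\tanh$ has image $(-1,1)$ (open) rather than $[-1,1]$ is what lets the endpoints $\lambda = \pm 1$ belong to $\Psi$. Everything else is a substitution into results already proved.
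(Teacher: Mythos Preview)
Your proposal is correct and follows essentially the same approach as the paper. The only cosmetic difference is in part (i): the paper rewrites the vanishing condition for $h'$ as $e^{2t_0}(\lambda+1)=\lambda-1$ and argues this has a real solution iff $|\lambda|>1$, whereas you divide by $\cosh t$ and use the range of $\tanh$; these are equivalent manipulations of the same equation, and for (ii)--(iii) both you and the paper reduce to the sign of $(\alpha')^2-\alpha^2=\alpha^2(\lambda^2-1)$.
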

\begin{proof}
(i).  Suppose that there exists some $t_0\in\mathbb{R}$ for which 
\begin{equation}
\label{PsiCondition0}
\frac{d}{dt} re^{\lambda t} \sinh t|_{t=t_0} =0.
\end{equation}
Its straightforward to show that (\ref{PsiCondition0}) is equivalent to the condition that 
\begin{equation}
\label{PsiCondition1}
e^{2t_0}(\lambda+1)=\lambda -1.
\end{equation}
It follows immediately from this that (\ref{PsiCondition1}) has a solution $t_0\in \mathbb{R}$ precisely when $|\lambda|>1$.  Hence, $\alpha \in  \Psi$ iff $|\lambda|\le 1$.  

(ii) and (iii).  This follow immediately from Theorem \ref{deSitterGeneral} and the fact that 
\begin{equation}
(\alpha')^2-\alpha^2=\alpha^2(\lambda^2-1).
\end{equation} 
\end{proof}
\begin{corollary}
\label{deSitterS}
Let $\alpha=r\cosh t$.  Then $\alpha\in \hat{\Psi}$ and $dS_{n+1}(f_\alpha)$ is a timelike hypersurface isometric to $\mathbb{R}\times S^n$ with metric
\begin{equation}
\label{deSitterS1}
g=-r^2 dt\otimes dt+r^2\cosh^4 t\Omega_n.
\end{equation}
\end{corollary}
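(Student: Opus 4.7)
The plan is to verify the three claims in sequence, each by direct computation and application of the preceding results.

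First, I would check that $\alpha = r\cosh t$ belongs to $\Psi$. Positivity is immediate since $r > 0$ and $\cosh t > 0$. For the second condition, I would compute
\begin{equation}
h(t) = \alpha(t)\sinh t = r\cosh t \sinh t = \tfrac{r}{2}\sinh(2t),
\end{equation}
so $h'(t) = r\cosh(2t) > 0$ for all $t$, confirming that $h'$ is nonvanishing. Hence $\alpha \in \Psi$.

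Next, I would show $\alpha \in \hat{\Psi}$ by checking $|\alpha'| \neq \alpha$. Since $\alpha' = r\sinh t$, the standard identity $\cosh^2 t - \sinh^2 t = 1$ gives $\cosh t > |\sinh t|$ for all $t$, so in fact $\alpha > |\alpha'|$ everywhere. This simultaneously places $\alpha$ in $\hat{\Psi}$ and, by the previous corollary characterizing causal type, shows that $dS_{n+1}(f_\alpha)$ is a timelike hypersurface.

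Finally, I would read off the metric from Theorem \ref{deSitterGeneral}. The coefficient of $dt\otimes dt$ is
\begin{equation}
(\alpha')^2 - \alpha^2 = r^2\sinh^2 t - r^2\cosh^2 t = -r^2,
\end{equation}
and the warping coefficient is
\begin{equation}
\alpha^2 \cosh^2 t = r^2\cosh^2 t \cdot \cosh^2 t = r^2\cosh^4 t,
\end{equation}
yielding exactly (\ref{deSitterS1}). Every step is a routine one-line verification; there is no real obstacle, as the corollary is essentially a substitution check into Theorem \ref{deSitterGeneral} once one observes the clean identity $(\cosh t)'^2 - \cosh^2 t = -1$.
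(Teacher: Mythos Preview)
Your proposal is correct and is exactly the intended verification: the paper states this corollary without proof, treating it as an immediate substitution into Theorem~\ref{deSitterGeneral}, and your three checks (positivity and $h'=r\cosh(2t)>0$ for $\alpha\in\Psi$; $\alpha>|\alpha'|$ for $\alpha\in\hat\Psi$ and timelikeness; direct evaluation of $(\alpha')^2-\alpha^2=-r^2$ and $\alpha^2\cosh^2 t=r^2\cosh^4 t$) are precisely what is needed.
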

\begin{corollary}
\label{deSitterT}
Let $\alpha(t)=\frac{r}{\cosh t}$ where $r>0$ is a constant.  Then $\alpha\in \hat{\Psi}$ and $dS_{n+1}(f_\alpha)$ is a timelike hypersurface isometric to $\mathbb{R}\times S^n$ with metric
\begin{equation}
\label{deSitterT1}
g=-\frac{r^2}{\cosh^4 t} ~dt\otimes dt+r^2\Omega_n.
\end{equation}
\end{corollary}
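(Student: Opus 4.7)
The proof is a direct verification by plugging $\alpha(t)=r/\cosh t$ into Theorem~\ref{deSitterGeneral} and the two preceding corollaries, so my plan is to organize the routine computations into three short checks.

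First I would verify membership in $\hat\Psi$. Positivity of $\alpha$ is immediate since $r>0$ and $\cosh t>0$. For the non-vanishing of $h'$, I would compute $h(t)=\alpha(t)\sinh t=r\tanh t$, whose derivative $r\operatorname{sech}^2 t$ is strictly positive everywhere; this gives $\alpha\in\Psi$. For the extra condition defining $\hat\Psi$, compute $\alpha'(t)=-r\sinh t/\cosh^2 t$, so that $|\alpha'|=r|\sinh t|/\cosh^2 t$ while $\alpha=r/\cosh t$; the inequality $|\alpha'|\neq\alpha$ reduces to $|\sinh t|\neq\cosh t$, which holds for all real $t$ because $\cosh^2 t-\sinh^2 t=1>0$.

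Second, I would invoke the preceding classification corollary: $dS_{n+1}(f_\alpha)$ is timelike iff $\alpha>|\alpha'|$ pointwise. Dividing through by $r/\cosh t>0$ turns this into $\cosh t>|\sinh t|$, which again holds for all $t$, so $dS_{n+1}(f_\alpha)$ is timelike.

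Finally, to identify the metric I would apply Theorem~\ref{deSitterGeneral}, which writes $g=((\alpha')^2-\alpha^2)\,dt\otimes dt+\alpha^2\cosh^2 t\,\Omega_n$. Using the identity $\cosh^2 t-\sinh^2 t=1$ gives
\[
(\alpha')^2-\alpha^2=\frac{r^2\sinh^2 t}{\cosh^4 t}-\frac{r^2}{\cosh^2 t}=-\frac{r^2}{\cosh^4 t},
\]
while $\alpha^2\cosh^2 t=r^2$, yielding exactly~(\ref{deSitterT1}). There is no genuine obstacle here; the only thing to keep an eye on is the careful bookkeeping of signs when checking $|\alpha'|<\alpha$ versus computing $(\alpha')^2-\alpha^2$, since the inequality is strict (hence timelike) but the squared difference is negative (hence the $dt\otimes dt$ coefficient is negative), and both features are consistent once one remembers $\cosh^2 t-\sinh^2 t=1$.
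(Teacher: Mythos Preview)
Your proof is correct and is exactly the intended argument: the paper states this corollary without proof, so the only thing to do is the direct verification you carry out---checking $\alpha\in\hat\Psi$, applying the timelike criterion $\alpha>|\alpha'|$, and substituting into the metric formula of Theorem~\ref{deSitterGeneral}. All computations are accurate.
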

Next, we show that $\Psi$ also contains $\alpha$ for which $dS_{n+1}(f_\alpha)$ is spacelike.  We begin with the following lemma:
\begin{lemma}
\label{SpacelikeConditionLemma}
If $\alpha:\mathbb{R}\rightarrow \mathbb{R}$ is a smooth positive function which satisfies 
\begin{itemize}
\item[(i)] $\frac{\alpha'}{\alpha}\tanh t>-1$ for all $t$
\item[(ii)] $\frac{\alpha'}{\alpha}>1$ for all $t$,
\end{itemize}
then $\alpha\in\Psi$ and $dS_{n+1}(f_\alpha)$ is spacelike.
\end{lemma}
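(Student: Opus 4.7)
The plan is to unpack each hypothesis and show it directly gives one of the two required conclusions, using the third corollary above for the spacelike condition.

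First, I would check that $\alpha \in \Psi$. Since $\alpha > 0$ is given, the only nontrivial point is showing $h(t) = \alpha(t)\sinh t$ has nowhere vanishing derivative. I would compute
\begin{equation}
h'(t) = \alpha'(t)\sinh t + \alpha(t)\cosh t = \alpha(t)\cosh t\left(\frac{\alpha'(t)}{\alpha(t)}\tanh t + 1\right).
\end{equation}
Because $\alpha > 0$ and $\cosh t > 0$ everywhere, hypothesis (i) — which is precisely the statement that the bracketed factor is positive — yields $h'(t) > 0$ for all $t$. Hence $h$ has nowhere vanishing derivative, so $\alpha \in \Psi$.

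Next, for the spacelike conclusion, I would invoke the causal-character corollary above, which reduces the spacelike condition to $\alpha < |\alpha'|$ everywhere. Hypothesis (ii) gives $\alpha'/\alpha > 1$, and combined with positivity of $\alpha$ this immediately yields $\alpha' > \alpha > 0$, so in particular $|\alpha'| = \alpha' > \alpha$. By the corollary, $dS_{n+1}(f_\alpha)$ is therefore spacelike.

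There is essentially no main obstacle — each hypothesis is engineered to match exactly one of the two required inequalities (positivity of $h'$ and the strict inequality $\alpha < |\alpha'|$). The only subtlety worth flagging in the write-up is that hypothesis (i) remains consistent at $t=0$, where $\tanh t$ vanishes and the inequality $0 > -1$ holds automatically, matching the direct evaluation $h'(0) = \alpha(0) > 0$.
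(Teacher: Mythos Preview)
Your proof is correct and follows essentially the same approach as the paper: the paper likewise observes that hypothesis (i) is equivalent to $h'(t)=\alpha'\sinh t+\alpha\cosh t>0$, giving $\alpha\in\Psi$, and that hypothesis (ii) forces $(\alpha')^2-\alpha^2>0$, giving the spacelike conclusion. Your version is slightly more explicit (the factorization of $h'$ and the $t=0$ sanity check), but the argument is the same.
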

\begin{proof}
Statement (i) is equivalent to the condition that 
\begin{equation}
\frac{d}{dt}(\alpha\sinh t)=\alpha'\sinh t+\alpha\cosh t>0
\end{equation}
for all $t$, which proves that $\alpha\in \Psi$.  (ii) implies that 
\begin{equation}
(\alpha')^2-\alpha^2>0
\end{equation}
for all $t$, which in turn implies that $dS_{n+1}(f_\alpha)$ is spacelike.
\end{proof}
\begin{proposition}
\label{SpacelikeAlphaProp}
There exists an $\alpha\in \Psi$ for which $dS_{n+1}(f_\alpha)$ is spacelike.
\end{proposition}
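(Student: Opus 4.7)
The plan is to apply Lemma \ref{SpacelikeConditionLemma} by exhibiting an explicit function $\alpha$ that satisfies its two hypotheses. Rewriting condition (i) as $\frac{\alpha'}{\alpha}\tanh t > -1$ and condition (ii) as $\frac{\alpha'}{\alpha} > 1$, I will set $\rho(t):=\frac{\alpha'}{\alpha}=(\ln\alpha)'$ and search for a $\rho$ with $\rho(t)>1$ everywhere and, for $t<0$, $\rho(t)<-\coth t$; on $t\ge 0$ the upper bound is automatic since $\tanh t\ge 0$. Note that $-\coth t\to 1^+$ as $t\to -\infty$, so the feasible window shrinks, and any candidate $\rho$ must tend to $1$ exponentially fast on the negative axis. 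A constant choice $\rho\equiv c>1$ therefore fails, which motivates $\rho(t)=1+e^{2t}$, i.e.\ the explicit choice
\begin{equation}
\alpha(t):=\exp\!\Bigl(t+\tfrac12 e^{2t}\Bigr).
\end{equation}

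Next I will verify the hypotheses of Lemma \ref{SpacelikeConditionLemma} for this $\alpha$. Positivity and smoothness are clear, and a direct differentiation gives $\frac{\alpha'}{\alpha}=1+e^{2t}>1$, settling condition (ii). For condition (i) I split into two cases: when $t\ge 0$ the product $(1+e^{2t})\tanh t$ is nonnegative, hence trivially $>-1$; when $t<0$, using $\tanh t=\frac{e^{2t}-1}{e^{2t}+1}$ one obtains the clean telescoping identity
\begin{equation}
(1+e^{2t})\tanh t = e^{2t}-1,
\end{equation}
and $e^{2t}-1>-1$ for every $t\in\mathbb{R}$. Thus both hypotheses of Lemma \ref{SpacelikeConditionLemma} hold, so $\alpha\in\Psi$ and $dS_{n+1}(f_\alpha)$ is spacelike, proving the proposition.

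There is no real obstacle here once the right $\alpha$ is written down; the only subtlety is the observation above that any candidate must satisfy $\rho(t)-1\to 0^+$ exponentially as $t\to-\infty$, which rules out the most naive guesses (constants, $\alpha=\cosh(ct)$, or $\alpha=e^{t}\cdot(1+\arctan t)$-type perturbations with only polynomial decay of $\rho-1$). The choice $\rho=1+e^{2t}$ is essentially the simplest one that produces the clean cancellation $(1+e^{2t})\tanh t=e^{2t}-1$, making the verification transparent.
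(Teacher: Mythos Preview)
Your proof is correct and follows essentially the same approach as the paper: both arguments reduce to Lemma~\ref{SpacelikeConditionLemma} by prescribing the logarithmic derivative $\alpha'/\alpha$ as $1$ plus a positive function and then integrating. The only difference is the particular choice of that positive function---the paper takes $\rho=\dfrac{2e^t}{e^{-t}+e^t}$ while you take $e^{2t}$; your choice yields the clean identity $(1+e^{2t})\tanh t=e^{2t}-1$, which makes the verification of condition~(i) slightly more transparent.
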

\begin{proof}
Let  $\zeta=1+\rho$ where $\rho$ is some smooth positive function on $\mathbb{R}$ that is yet to be determined.  We will chose $\rho$ to satisfy the condition $\zeta \tanh t>-1$ for all $t$.  With $\rho>0$ for all $t$, it is straightforward to show that the aforementioned condition is equivalent to the statement that 
\begin{equation}
\label{conditionRho}
\rho<\frac{2e^t}{e^{-t}-e^t}\hspace*{0.1in}\forall t<0.
\end{equation}    
To satisfy (\ref{conditionRho}), we set 
\begin{equation}
\rho:=\frac{2e^t}{e^{-t}+e^t}.
\end{equation}
With this choice of $\rho$, the inequalities $\zeta\tanh t>-1$ and $\zeta>1$ are satisfied for all $t$.  By setting 
\begin{equation}
\alpha(t):=e^{\int_0^t \zeta~ds},
\end{equation}
we see that $\frac{\alpha'}{\alpha}=\zeta$.  By Lemma \ref{SpacelikeConditionLemma}, $\alpha\in \Psi$ and $dS_{n+1}(f_\alpha)$ is spacelike.
\end{proof}
\noindent The $\alpha$ constructed in the proof of Proposition \ref{SpacelikeAlphaProp} satisfies the condition
\begin{equation}
\lim_{t\rightarrow -\infty}\frac{\alpha}{\alpha'}=1.
\end{equation}
This is far from a coincidence as the next result shows.
\begin{proposition}
Suppose $\alpha\in \Psi$ and $dS_{n+1}(f_\alpha)$ is a spacelike hypersurface.  Then one of the following must be true:
\begin{itemize}
\item[(i)] $\displaystyle \lim_{t\rightarrow -\infty}\frac{\alpha}{\alpha'}=1$, or
\item[(ii)] $\displaystyle \lim_{t\rightarrow \infty}\frac{\alpha}{\alpha'}=-1$.
\end{itemize}
\end{proposition}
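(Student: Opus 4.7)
The plan is to reduce everything to the logarithmic derivative $u(t):=\alpha'(t)/\alpha(t)$ and then exploit the two running hypotheses: (a) since $dS_{n+1}(f_\alpha)$ is spacelike, Theorem~\ref{deSitterGeneral} gives $(\alpha')^2-\alpha^2>0$, i.e.\ $|u(t)|>1$ for all $t$; and (b) since $\alpha\in\Psi$, the derivative
\[
h'(t)=\alpha'(t)\sinh t+\alpha(t)\cosh t=\alpha(t)\cosh t\,\bigl(u(t)\tanh t+1\bigr)
\]
never vanishes, so $u(t)\tanh t\neq -1$ for every $t$. Because $u$ is continuous and avoids the interval $[-1,1]$, one of two mutually exclusive cases must hold: $u(t)>1$ for all $t$, or $u(t)<-1$ for all $t$. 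I will show these cases force (i) and (ii), respectively.

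Case $u>1$: I will look at $t<0$, where $\sinh t<0$ and $\cosh t>0$. The non-vanishing of $h'$, together with $\alpha\cosh t>0$, forces $u(t)\tanh t+1$ to keep a constant sign; for $t>0$ the quantity $u\tanh t+1$ is positive (both terms are positive), so by continuity it is positive everywhere, giving $u(t)\tanh t>-1$ for all $t<0$. Dividing by $u(t)>1$ and rearranging yields
\[
\tanh(-t)=-\tanh t<\frac{1}{u(t)}=\frac{\alpha(t)}{\alpha'(t)}<1 \qquad\text{for all }t<0.
\]
Letting $t\to-\infty$, $\tanh(-t)\to 1$, and the squeeze theorem gives $\lim_{t\to-\infty}\alpha/\alpha'=1$, establishing (i).

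Case $u<-1$: the argument is symmetric. Now $u\tanh t+1<0$ for $t<0$ (since $u<0$ and $\tanh t<0$ give $u\tanh t>1$), hence by continuity and non-vanishing of $h'$, $u\tanh t+1<0$ for all $t$. For $t>0$ this rearranges to $u(t)\tanh t<-1$, i.e.\ $-u(t)\tanh t>1$, so
\[
\tanh t<\frac{-1}{u(t)}=-\frac{\alpha(t)}{\alpha'(t)}<1\qquad\text{for all }t>0,
\]
the second inequality coming from $u(t)<-1\Leftrightarrow -1/u(t)<1$. Sending $t\to+\infty$ gives $\lim_{t\to\infty}(-\alpha/\alpha')=1$, i.e.\ (ii).

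The main (minor) obstacle is getting the sign bookkeeping right and confirming that $u(t)\tanh t+1$ has constant sign globally — this is what links the sign convention on $u$ to whether the relevant limit occurs at $-\infty$ or at $+\infty$. Once that is in place, each case is a one-line squeeze against $\tanh(\pm t)\to 1$.
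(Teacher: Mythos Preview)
Your overall strategy coincides with the paper's: pass to $u=\alpha'/\alpha$, split into the cases $u>1$ and $u<-1$, pin down the sign of $u\tanh t+1$ from the $\Psi$ hypothesis, and squeeze $\alpha/\alpha'=1/u$ against $\pm\tanh t$. Your Case $u>1$ is correct and matches the paper's argument.

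Case $u<-1$ contains two sign errors that happen to cancel, so the displayed squeeze inequality is right but the derivation is not. First, you claim $u\tanh t+1<0$ for $t<0$ ``since $u<0$ and $\tanh t<0$ give $u\tanh t>1$''. The product of two negatives is positive, but only $u\tanh t>0$ follows; to get $u\tanh t>1$ you would need $|u|>\coth|t|$, which fails for $t$ near $0$. Worse, even if $u\tanh t>1$ held, that would force $u\tanh t+1>2>0$, contradicting your stated sign. The clean fix (which the paper uses, and which you used in Case~1) is to evaluate at $t=0$: there $u\tanh t+1=1>0$, so by continuity and nonvanishing $u\tanh t+1>0$ everywhere. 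Second, from your (incorrect) $-u\tanh t>1$ you wrote $\tanh t<-1/u$; dividing by $-u>0$ actually gives $\tanh t>-1/u$. The correct route is: from $u\tanh t+1>0$ for $t>0$ one gets $u\tanh t>-1$; dividing by $u<0$ flips the inequality to $\tanh t<-1/u$, and combining with $-1/u<1$ (from $u<-1$) gives the legitimate squeeze $\tanh t<-1/u<1$, whence $\alpha/\alpha'\to-1$ as $t\to\infty$.
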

\begin{proof}
Since $dS_{n+1}(f_\alpha)$ is spacelike, it follows from Theorem \ref{deSitterGeneral} that
\begin{equation}
\label{derivativeCondition0}
|\alpha'|>\alpha~~\forall~t,
\end{equation}
which in turn implies that $\alpha'\neq 0$ for all $t$.   Since $\alpha\in \Psi$, we have
\begin{equation}
\frac{d}{dt}\alpha(t)\sinh t\neq 0~~\forall ~t,
\end{equation}
which in turn is equivalent to 
\begin{equation}
\label{derivativeCondition1}
\alpha'\tanh t+\alpha\neq 0~~\forall ~ t.
\end{equation}
Since $\mathbb{R}$ is connected and (\ref{derivativeCondition1}) evaluated at $t=0$ is $\alpha(0)>0$, we have
\begin{equation}
\label{derivativeCondition2}
\alpha'\tanh t+\alpha>0~~\forall~ t.
\end{equation}
Combining (\ref{derivativeCondition2}) with (\ref{derivativeCondition0}) gives
\begin{equation}
\label{derivativeCondition3}
-\alpha'\tanh t <\alpha<|\alpha'|.
\end{equation}
Since $\mathbb{R}$ is connected and $\alpha'\neq 0~\forall~t$, it follows that $\alpha'>0~\forall~t$ or $\alpha'<0$ for all $t$.  If $\alpha'>0~\forall~t$, then (\ref{derivativeCondition3}) implies that 
\begin{equation}
\label{derivativeCondition4}
-\tanh t<\frac{\alpha}{\alpha'}<1.
\end{equation}
Taking the limit of (\ref{derivativeCondition4}) as $t\rightarrow -\infty$ gives (i).   If $\alpha'<0$ for all $t$,  then (\ref{derivativeCondition3}) implies that
\begin{equation}
\label{derivativeCondition5}
-1<\frac{\alpha}{\alpha'}<-\tanh t.
\end{equation}
Taking the limit of (\ref{derivativeCondition5}) as $t\rightarrow \infty$ gives (ii).
\end{proof}

So far, we have shown that the generalized model presented in this paper contains timelike, null, and spacelike hypersurfaces.   In the case of null hypersurfaces, the precise form of the defining equation is known.   
In Corollary \ref{deSitterS}, we gave an example of a spacetime which contracts for $t<0$ and expands for $t>0$, which is what ordinary de Sitter space does.  In Corollary \ref{deSitterT}, we gave an example of a spacetime whose radius remains constant for all time.  We conclude this section by showing that the generalized model contains enough flexibility to merge characteristics from both of these spacetimes to produce a spacetime which expands for $t>0$, but has a constant, nonzero radius for $t\le 0$.
\begin{proposition}
\label{NoContraction}
There exists an $\alpha\in \hat{\Psi}$ so that $dS_{n+1}(f_\alpha)$ is isometric to $\mathbb{R}\times S^n$ with metric
\begin{equation}
g=\beta(t) ~dt\otimes dt+a(t)^2~\Omega_n
\end{equation}
where $a,~\beta:\mathbb{R}\rightarrow \mathbb{R}$ are smooth functions for which
\begin{itemize}
\item[(i)] $a(t)$ is a positive constant for $t\le 0$ and an increasing function for $t>0$
\item[(ii)] $\displaystyle\lim_{t\rightarrow \infty} a(t)=\infty$
\item[(iii)] $\beta(t)<0$ $\forall t$
\item[(iv)] $\displaystyle\lim_{t\rightarrow \infty} \beta(t)=-1$.
\end{itemize}
\end{proposition}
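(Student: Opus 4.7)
The plan is to specify the $\Omega_n$-coefficient $a(t) := \alpha(t)\cosh t$ of the metric from Theorem~\ref{deSitterGeneral} directly, since conditions (i)--(ii) of the proposition concern $a$ rather than $\alpha$ itself. The idea is to interpolate between the static model of Corollary~\ref{deSitterT} (where $\alpha=\operatorname{sech}(t)$ and $a\equiv 1$) on $(-\infty,0]$ and an ordinary-de-Sitter-like end (where $\alpha\equiv 1$ and $a=\cosh t$) on $[T,\infty)$ for some fixed $T>0$. Concretely, I would set
\begin{equation*}
\alpha(t) := (\cosh t)^{\psi(t)-1}, \qquad a(t) = \alpha(t)\cosh t = (\cosh t)^{\psi(t)},
\end{equation*}
for a smooth non-decreasing cut-off $\psi:\mathbb{R}\to[0,1]$ with $\psi\equiv 0$ on $(-\infty,0]$, $\psi\equiv 1$ on $[T,\infty)$, $\psi(t)>0$ for $t>0$, and satisfying the slope bound $\|\psi'\|_\infty\,\ln\cosh(T)<1$. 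Such a $\psi$ exists because $\ln\cosh(T)<T$ for every $T>0$, so the interval $[1/T,\,1/\ln\cosh(T))$ of admissible values for $\|\psi'\|_\infty$ is non-empty; standard bump-function constructions then produce the desired $\psi$.

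The first step is to place $\alpha$ in $\hat\Psi$. Logarithmic differentiation of $\ln\alpha=(\psi-1)\ln\cosh t$ gives
\begin{equation*}
\frac{\alpha'}{\alpha} = \psi'(t)\ln\cosh t + (\psi(t)-1)\tanh t.
\end{equation*}
On $(0,T)$ the first summand lies in $[0,\,\|\psi'\|_\infty\ln\cosh T)\subseteq[0,1)$ and the second in $[-\tanh t,\,0]$; on $(-\infty,0]$ the expression equals $-\tanh t$; and on $[T,\infty)$ it vanishes. In every case $|\alpha'/\alpha|<1$, which simultaneously places $\alpha$ in $\hat\Psi$ and gives $\beta:=(\alpha')^2-\alpha^2<0$. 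The remaining $\Psi$-condition $h'(t)\ne 0$ follows from $h'=\alpha\cosh t\,(1+\tanh t\cdot\alpha'/\alpha)$ together with $|\tanh t\cdot\alpha'/\alpha|<1$, which forces the bracketed factor into $(0,2)$.

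Theorem~\ref{deSitterGeneral} then identifies $dS_{n+1}(f_\alpha)$ with $\mathbb{R}\times S^n$ carrying the metric $g=\beta(t)\,dt\otimes dt+a(t)^2\,\Omega_n$, and properties (i)--(iv) drop out: $a\equiv 1$ on $(-\infty,0]$, while for $t>0$ the identity $a'=a\bigl[\psi'(t)\ln\cosh t+\psi(t)\tanh t\bigr]>0$ (using $\psi(t)>0$ and $\tanh t>0$) shows $a$ strictly increasing, giving (i); $a=\cosh t\to\infty$ on $[T,\infty)$ gives (ii); (iii) has already been noted; and on $[T,\infty)$ we in fact have $\alpha\equiv 1$, hence $\alpha'\equiv 0$, so $\beta\equiv -1$ there---a strengthening of the asymptotic requirement (iv).

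The delicate point is the simultaneous satisfaction of $\int_0^T\psi'=1$ and $\|\psi'\|_\infty<1/\ln\cosh(T)$ in the construction of $\psi$; together these constraints force $T>\ln\cosh(T)$, which is precisely what allows the static-to-expanding transition to be made without ever violating $|\alpha'|<\alpha$ on the transition interval. Everything else is a routine calculation.
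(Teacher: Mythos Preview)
Your argument is correct and takes a genuinely different route from the paper's. The paper builds $a$ additively as $a(t)=\mu(t)\cosh t+r$ with the standard bump $\mu(t)=e^{-1/t^2}$ for $t>0$, and then has to push the constant $r$ large to force the timelike condition $a'(a'-2a\tanh t)\le 0$ on $(0,\infty)$; this entails several ad hoc numerical estimates (locating the maximum of $\mu'$ at $\sqrt{2/3}$, picking a crossover point $t_0=2.4$, comparing $\mu'$ with $2\tanh t/\cosh t$, etc.). Your multiplicative interpolation $a(t)=(\cosh t)^{\psi(t)}$ replaces all of that by a single logarithmic-derivative computation: the identity $\alpha'/\alpha=\psi'\ln\cosh t+(\psi-1)\tanh t$ immediately bounds $|\alpha'/\alpha|$ by $\max(\|\psi'\|_\infty\ln\cosh T,\tanh t)<1$, and this one inequality simultaneously yields $\alpha\in\hat\Psi$, $h'>0$, and $\beta<0$. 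The trade-off is that you need the auxiliary observation $\ln\cosh T<T$ to guarantee a cutoff with small enough slope exists, whereas the paper's construction needs no such existence lemma but pays for it with messier estimates. Your version also delivers $\beta\equiv-1$ on $[T,\infty)$, slightly stronger than the asymptotic statement (iv). One cosmetic point: your ``interval $[1/T,1/\ln\cosh T)$ of admissible values for $\|\psi'\|_\infty$'' should really be open at the left endpoint, since a smooth nondecreasing $\psi$ going from $0$ to $1$ on $[0,T]$ must have $\|\psi'\|_\infty>1/T$ strictly; this does not affect the argument.
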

\begin{proof}
We begin with a proof of (iii). Let 
\begin{equation}
\label{alphaDef}
\alpha(t):=\frac{a(t)}{\cosh t}
\end{equation}
where $a(t)>0$ is a smooth, non-decreasing function which will be specified shortly and let
\begin{equation}
\label{omegaDef}
\beta(t):= (\alpha')^2-\alpha^2.
\end{equation}
A direct calculation shows that $\beta(t)<0$ iff
\begin{equation}
\label{timelikeCondition}
a^2(\tanh^2 t-1)-2aa'\tanh t +(a')^2<0.
\end{equation}
Since $a^2(\tanh^2 t-1)<0$, the condition that
\begin{equation}
\label{timelikeCondition1}
a'(a'-2a\tanh t)\le 0
\end{equation}
is sufficient to ensure that $\beta < 0$ for all $t$.  To define $a$, let $\mu$ be the smooth function\footnote{For a proof that $\mu$ is smooth, see Lemma 2.20 of \cite{L}.} given by
\begin{equation}
\mu:=\left\{\begin{array}{ll}
e^{-1/t^2} & t>0\\
0 & t\le 0
\end{array}.\right.
\end{equation}
Then we define $a(t)$ to be
\begin{equation}
\label{aDef}
a(t):= \mu(t) \cosh t+r
\end{equation}
where $r>0$ is a constant which will be determined shortly.   Since $a\equiv r$ for $t\leq 0$, it follows that $a'\equiv 0$ for $t<0$.  Since $a'$ is continuous (actually smooth), we also have $a'(0)=0$.  From this, we see that (\ref{timelikeCondition1}) is satisfied for $t\le 0$.  Since $a'>0$ for $t>0$, it suffices to show that for $t>0$,
\begin{equation}
\label{timelikeCondition2}
a'-2a\tanh t\le 0,
\end{equation}
which in turn, is equivalent to the condition that
\begin{equation}
\label{timelikeCondition3}
\mu'(t)-\mu(t)\tanh t-\frac{2r \tanh t}{\cosh t}\le 0
\end{equation}
for $t>0$.  Note that $\mu(t)\tanh(t)$ is an increasing function for $t>0$ for which 
\begin{equation}
\lim_{t\rightarrow \infty} \mu(t)\tanh(t) =1,
\end{equation}
while $\mu'$ is increasing for $0<t<\sqrt{2/3}$ and decreasing for $t>\sqrt{2/3}$.   Since $\mu'(\sqrt{2/3})<1$, there exists a $t_0>0$ such that 
\begin{equation}
\label{inequality0}
\mu(t)\tanh t> \mu'(\sqrt{2/3})
\end{equation}
for $t\ge t_0$; for concreteness, we set $t_0=2.4$.  The function 
\begin{equation}
\rho(t):=\frac{2 \tanh t}{\cosh t}
\end{equation}
is increasing for $0<t<\sinh^{-1}(1)\simeq 0.88$ and decreasing for  $t>\sinh^{-1}(1)$.  Since
\begin{align}
\mu'(0)=\rho(0)=0
\end{align}
and
\begin{align}
\mu''(0)=0,~~\rho'(0)>0,
\end{align}
it follows that there exists an $\varepsilon>0$ arbitrarily small so that 
\begin{equation}
\mu'(t)<\rho(t):=\frac{2 \tanh t}{\cosh t}
\end{equation}
for $0<t<\varepsilon$.  If $\varepsilon>\sinh^{-1}(1)$, replace $\varepsilon$ with any positive number less than $\sinh^{-1}(1)$.  Now choose $r_0>1$ so that the following inequalities are both satisfied:
\begin{align}
\frac{2r_0 \tanh \varepsilon/2}{\cosh \varepsilon/2} & > \mu'(\sqrt{2/3})\\
\frac{2r_0 \tanh t_0}{\cosh t_0} & > \mu'(\sqrt{2/3}).
\end{align}
Since $\rho(t)$ is increasing for $0<t<\sinh^{-1}(1)$ and decreasing for  $t>\sinh^{-1}(1)$, it folows that  
\begin{equation}
\label{inequality1}
\frac{2r \tanh t}{\cosh t}>\mu'(t)
\end{equation}
for $0<t\le t_0$ and $r\ge r_0$.   Fixing $r\ge r_0$ and combining (\ref{inequality1}) with (\ref{inequality0}) proves (\ref{timelikeCondition3}) for $t>0$; this in turn completes the proof of statement (iii) of Proposition \ref{NoContraction}. 

With $a(t)$ given by (\ref{aDef}), we have $a'=0$ for $t\le 0$ and $a'>0$ for $t>0$; this proves statement (i) of Proposition \ref{NoContraction}.  Statement (ii) is immediate from the definition of $a(t)$.  For statement (iv), we have 
\begin{align}
\lim_{t\rightarrow \infty}& \beta(t)=\lim_{t\rightarrow \infty} (\alpha')^2-\alpha^2\\
\nonumber
&=\lim_{t\rightarrow \infty}(\mu+\frac{r}{\cosh t})^2(\tanh^2 t-1)-2(\mu+\frac{r}{\cosh t})(\mu'+\mu\tanh t)\tanh t\\
\nonumber
&+(\mu'+\mu\tanh t)^2 \\
\nonumber
&=-1.
\end{align}

Lastly, let 
\begin{equation}
h(t)=\alpha(t)\sinh(t).
\end{equation}
Its straightforward to show that $h'(t)\neq 0$ for all $t$.  Theorem \ref{deSitterGeneral} implies that $dS_{n+1}(f_\alpha)$ is isometric to $\mathbb{R}\times S^n$ with metric\begin{equation}
\beta(t)~dt\otimes dt+a(t)^2~\Omega_n.
\end{equation}
\end{proof}

\section{Calculation of $\mbox{Ric}$ and $S$ for $dS_4(f_\alpha)$ $(\alpha=re^{\lambda t})$}
We conclude this note by observing how the Ricci tensor and scalar curvature of the generalized model differ from ordinary de Sitter space for the case of $dS_4(f_\alpha)$, where $\alpha=re^{\lambda t}$ with $r$ and $\lambda$ constants satisfying $r>0$ and $-1<\lambda<1$.   (Note that $dS_4(r)$ arises as a special case of $dS_4(f_\alpha)$ upon setting $\lambda=0$.)   The choice of $\alpha=re^{\lambda t}$ is of special interest since all of the null hypersurfaces in the generalized model are obtained by setting $\lambda = \pm 1$.  

For ordinary $n+1$-dimensional de Sitter space $dS_{n+1}(r)$, the Ricci tensor and scalar curvature are given by
\begin{align}
&R_{\mu\nu}=\frac{n}{r^2} g_{\mu\nu}\\
&S=\frac{n(n+1)}{r^2}
\end{align}
where $R_{\mu\nu}:=\mbox{Ric}_{\mu\nu}$ and $g$ is the de Sitter metric.  

By Corollary \ref{HypersurfaceFamily}, $dS_4(f_\alpha)$ is a timelike hypersurface precisely when $-1<\lambda<1$.  Also, by Theorem \ref{deSitterGeneral}, $dS_4(f_\alpha)$ is isometric to $\mathbb{R}\times S^3$ with metric 
\begin{equation}
\nonumber 
g=((\alpha')^2-\alpha^2)dt\otimes dt+\alpha^2\cosh^2 t~\Omega_3.
\end{equation}
Using hyperspherical coordinates $(\psi,\theta,\phi)$ for $S^3\subset \mathbb{R}^4$ where
\begin{align}
x^1&=\cos\psi\\
x^2&=\sin\psi\cos\theta\\
x^3&=\sin\psi\sin\theta\cos\phi\\
x^4&=\sin\psi\sin \theta \sin \phi,
\end{align}
the non-zero components of the metric $g$ of $dS_4(f_\alpha)$  (with local coordinates $(t,\psi,\theta,\phi)$ for $\mathbb{R}\times S^3$) are
\begin{align}
g_{tt}&=\alpha^2(\lambda^2-1)\\
g_{\psi\psi} &=\alpha^2\cosh^2 t\\
g_{\theta\theta}&=\alpha^2\cosh^2 t\sin^2\psi\\
g_{\phi\phi}&=\alpha^2\cosh^2 t \sin^2 \psi \sin^2\theta.
\end{align}
\noindent  
For the Christoffel symbols\footnote{The Einstein summation convention of summing over repeated upper and lower indices is in effect in (\ref{Christoffel}) and for the rest of the paper.}
\begin{equation}
\label{Christoffel}
\Gamma^\rho_{\mu\nu} = \frac{1}{2}g^{\rho\sigma}(\partial_\mu g_{\nu\sigma} +\partial_\nu g_{\mu\sigma}-\partial_\sigma g_{\mu\nu}),
\end{equation}
define 
\begin{equation}
q_\lambda(t):=\frac{(\lambda+\tanh t)\cosh^2 t}{1-\lambda^2}.
\end{equation}
Then the nonzero\footnote{The complete list of nonzero Christoffel symbols can be obtained from the above list by using the fact that $\Gamma^\rho_{\mu\nu}=\Gamma^\rho_{\nu\mu}$.} Christoffel symbols are
\begin{align}
&\Gamma^t_{tt}=\lambda,\hspace*{0.05in}\Gamma^t_{\psi\psi}=q_\lambda(t),\hspace*{0.05in}\Gamma^{t}_{\theta\theta}=q_\lambda(t)\sin^2\psi,\hspace*{0.05in} \Gamma^t_{\phi\phi}=q_\lambda(t)\sin^2\psi\sin^2\theta\\
&\Gamma^\psi_{t\psi}=\lambda+\tanh t,\hspace*{0.05in} \Gamma^\psi_{\theta\theta}=-\sin\psi\cos \psi,\hspace*{0.05in}\Gamma^\psi_{\phi\phi}=-\sin^2\theta \sin \psi\cos \psi\\
&\Gamma^\theta_{t\theta}=\lambda+\tanh t,\hspace*{0.05in} \Gamma^\theta_{\phi\phi}=-\sin\theta \cos \theta,\hspace*{0.05in} \Gamma^\theta_{\psi\theta}=\cot \psi\\
&\Gamma^\phi_{t\phi}=\lambda+\tanh t,\hspace*{0.05in}\Gamma^\phi_{\psi\phi} =\cot \psi,\hspace*{0.05in} \Gamma^\phi_{\theta\phi} =\cot \theta. 
\end{align}
For the Ricci tensor
\begin{equation}
R_{\mu\nu}:=R^\rho_{~\mu\rho\nu}=\partial_\rho\Gamma^\rho_{\nu\mu}-\partial_\nu\Gamma^\rho_{\rho\mu}+\Gamma^\rho_{\rho\lambda}\Gamma^\lambda_{\nu\mu}-\Gamma^\rho_{\nu\lambda}\Gamma^\lambda_{\rho\mu},
\end{equation}
define
\begin{equation}
f_\lambda(t) := \frac{(\lambda\cosh t+ \sinh t)(3\sinh t+2\lambda \cosh t)+3-2\lambda^2}{1-\lambda^2}.
\end{equation}
Then the non-zero components of the Ricci tensor are
\begin{align}
\label{Rt}
&R_{tt}=-3(1+\lambda\tanh t)\\
\label{Rpsi}
&R_{\psi\psi} = f_\lambda(t)\\
\label{Rtheta}
&R_{\theta \theta}= f_\lambda(t)\sin^2\psi \\
\label{Rphi}
&R_{\phi\phi}=f_\lambda(t)\sin^2\psi\sin^2\theta.
\end{align}
The scalar curvature 
\begin{equation}
S:=g^{\mu\nu}R_{\mu\nu}
\end{equation}
is then 
\begin{equation}
\label{Scalc}
S=\frac{3}{\alpha^2}\left(\frac{1+\lambda\tanh t}{1-\lambda^2}+\frac{f_\lambda(t)}{\cosh^2 t}\right).
\end{equation}
Setting $\lambda=0$, equations (\ref{Rt})-(\ref{Rphi}) and (\ref{Scalc}) reduce to 
\begin{align}
& R_{tt}=-3\\
& R_{\psi\psi}= 3\cosh^2 t\\
& R_{\theta\theta}=3\sin^2\psi \cosh^2t\\
& R_{\phi\phi}=3\sin^2 \psi\sin^2\theta \cosh^2 t\\
& S= \frac{12}{r^2},
\end{align}
 which (as expected) are the Ricci tensor and scalar curvature of $dS_4(r)$.    So we see from equation (\ref{Scalc}) that any deviation from $\lambda= 0$ results in a scalar curvature for $dS_4(f_\alpha)$  that varies with $t$.  In particular, 
 \begin{align}
 \lim_{t\rightarrow \infty} S =0,\hspace*{0.1in} \lim_{t\rightarrow -\infty} S=\infty
 \end{align}
 for $ 0<\lambda<1$ and
 \begin{align}
 \lim_{t\rightarrow \infty} S =\infty,\hspace*{0.1in}\lim_{t\rightarrow -\infty} S = 0
 \end{align}
  for $-1<\lambda < 0$.


\begin{thebibliography}{99}

\bibitem
{BE} J. Beem, P. Ehrlich
\textit{Global Lorentzian Geometry}, 
Marcel Dekker Pure and Applied Mathematics Vol. 67, New York, 1981.

\bibitem
{HE} S. Hawking, G. Ellis
\textit{The large scale structure of space-time},
Cambridge University Press, 1973.

\bibitem
{L} J. Lee,
\textit{Introduction to Smooth Manifolds},
Springer-Verlag, New York Inc., 2003.

\bibitem
{On} B. O'Neill,
\textit{Semi-Riemannian Geometry with Applications to Relativity},
Academic Press, 1983.

\bibitem
{SSV} M. Spradlin, A. Strominger, A. Volovich
\textit{Les Houches lectures on de Sitter space},
hep-th/0110007v2
\end{thebibliography}
\end{document}